\documentclass[12pt]{amsart}
\usepackage{latexsym}
\usepackage{amsmath}
\usepackage{amsfonts}
\usepackage{amsthm}

\usepackage{tabto}
\usepackage{ifthen}
\usepackage{marginnote}
\usepackage{fancyhdr}
\usepackage{xcolor}
\usepackage{mdframed}
\usepackage[marginparwidth=2.4cm]{geometry}

\usepackage{amssymb}
\usepackage{ifthen}
\usepackage{enumerate}
\usepackage[T1]{fontenc}
\usepackage{dutchcal}
\usepackage{cite}
\usepackage{graphicx}

\usepackage[mathscr]{eucal}
\newcommand*{\rom}[1]{\expandafter\@slowromancap\romannumeral #1@}

\DeclareFontFamily{U}{mathx}{}
\DeclareFontShape{U}{mathx}{m}{n}{<-> mathx10}{}
\DeclareSymbolFont{mathx}{U}{mathx}{m}{n}
\DeclareMathAccent{\widehat}{0}{mathx}{"70}
\DeclareMathAccent{\widecheck}{0}{mathx}{"71}
\def\eq#1{{\rm(\ref{E#1})}}
\def\Eq#1#2{\ifthenelse{\equal{#1}{*}}
  {\begin{equation*}\begin{aligned}#2\end{aligned}\end{equation*}}
  {\begin{equation}\begin{aligned}\label{E#1}#2\end{aligned}\end{equation}}}

\newcounter{allenv}[section]

\newtheorem{thm}{Theorem}
\newtheorem*{thm*}{Theorem}
\newtheorem{prop}{Proposition}
\newtheorem*{prop*}{Proposition}
\newtheorem*{conj*}{Conjecture}
\newtheorem{coro}{Corollary}

\newtheorem*{lemm*}{Lemma}

\theoremstyle{remark}

\newtheorem*{exmp*}{Example}

\newtheorem*{opn*}{Open problem}

\theoremstyle{definition}

\newcommand{\id}{\mbox\small{\rm -id}}

\def\bt{\mathrm{BT}}
\def\bts{\mathrm{BT}^\#}

\author[T. Kiss]{Tibor Kiss}

\title[Strictly nonexpansive, strictly monotone quasi graph-additive functions]{Strictly nonexpansive, strictly monotone quasi graph-additive functions}

\address{Institute of Mathematics,
University of Debrecen,
4002 Debrecen, Pf.~400, Hungary}
\email{kiss.tibor@science.unideb.hu}

\keywords{functional equations, graph-additive functions, Abel equation, Cauchy equation} 

\subjclass[2020]{39B22, 26E60}

\thanks{The research was supported by the Tempus Public Foundation and HUN-REN Hungarian Research Network.}

\def\eq#1{{\rm(\ref{E#1})}}

\def\Eq#1#2{\ifthenelse{\equal{#1}{*}}
	{\begin{equation*}\begin{aligned}[]#2\end{aligned}\end{equation*}}
	{\begin{equation}\begin{aligned}\label{E#1}#2\end{aligned}\end{equation}}}

\begin{document}

\begin{abstract}
In this paper, we provide a negative answer to an open problem concerning the functional equation
\Eq{*}{f(f(-x)+x)=f(-f(x))+f(x),}
namely by showing that the family of continuous solutions is too rich to admit a complete description. Instead, we characterize the solutions within a certain subfamily.
\end{abstract}

\maketitle

\section{Introduction}

In the 2025 paper \cite{Mat25}, Janusz Matkowski proved that if a function $M:\mathbb{R}\times\mathbb{R}\to\mathbb{R}$ is \emph{translative} and \emph{weakly associative} (for further details on this topic, see also \cite[Głazowska--Matkowski]{GlaMat25}), that is, if for all $x,y,u\in\mathbb{R}$, we have
\Eq{*}{
M(x+u,y+u)=M(x,y)+u
\qquad\text{and}\qquad
M(M(x,y),x)=M(x,M(y,x)),
}
respectively, then the single variable function $f(x)=M(x,0)$, $x\in\mathbb{R}$ must satisfy
\Eq{0}{
f(f(-x)+x)=f(-f(x))+f(x),\qquad x\in\mathbb{R}.}
In \cite{Mat25}, the author did not solve the above functional equation, but posed the determination of continuous solutions as an open problem, and formulated a conjecture regarding the solutions. 

To fully understand the motivation behind the conjecture, we must take a brief detour and it is essential to mention the work of W. Jarczyk \cite{Jar88,Jar91}, who solved the co-equation
\Eq{jar}{f(f(x)+x)=f(f(x))+f(x),\qquad x\in\mathbb{R}}
much earlier under remarkably weak assumptions. To be more precise, he verified that all continuous solutions of equation \eq{jar}, that is, continuous \emph{graph-additive function}s, are either linear with a negative slope or positively homogeneous on both the negative and the positive half-line with positive slopes not necessarily equal. Although equation \eq{jar} looks innocent and the solutions are elementary functions, the proof is highly intricate and requires rather sophisticated techniques. Background to Jarczyk’s work, as well as further related results, can be found in the papers \cite[Zdun]{Zdu72}, \cite[Forti]{For83,For84}, \cite[Sablik]{Sab85} and \cite[Matkowski]{Mat85,Mat93}.

Motivated by the striking similarity between the above equations and by Jarczyk's result, Matkowski conjectured that \eq{0} and \eq{jar} have identical continuous solutions. The fact that this conjecture is false follows directly from the results of paper \cite{Mat25}. 

Indeed, the quasi-sum $M(x,y)=\ln(\exp(x)+\exp(y))$, $x,y\in\mathbb{R}$ is associative (hence it is also weakly associative) and, due to the choice of the generator, it is also translative. Consequently the function $x\mapsto M(x,0)=\ln(\exp(x)+1)$ is a continuous solution of \eq{jar}, which is obviously not of Jarczyk type.

Following this, the question arises whether one obtains solutions similar to those of Jarczyk by assuming the function to be positively homogeneous on the non-positive half-line, for instance. The answer is in the negative. This line of inquiry was settled in \cite{Kis26}.

Among other results, we demonstrate in the present paper that Problem 1 formulated in \cite{Mat25} cannot be solved in its full generality. Then we present a subclass of continuous functions where the solutions of \eq{0} can be characterized in some sense.

\section{Notation and conventions}
Let $I$ stand for one of the intervals $\mathbb{R}_-:=(-\infty,0]$, $\mathbb{R}_+:=[0,+\infty)$ or $\mathbb{R}$. Then, for a function $\varphi:D\subseteq\mathbb{R}\to\mathbb{R}$ we shall write $\varphi\in\bt(I)$ if $I\subseteq D$ and both of the inequalities
\Eq{btc}{
x\varphi(x)\geq 0
\qquad\text{and}\qquad
x(x-\varphi(x))\geq 0
}
are satisfied for all $x\in I$. We note that \eq{btc} provides that $\varphi(I^\circ)\subseteq I$ holds. The subfamily of $\bt(I)$ which consists of those functions for which both of the inequalities in \eq{btc} are strict whenever $x\neq 0$ will be denoted by $\bts(I)$. The notation is motivated by the observation that, for $I=\mathbb{R}$, condition \eq{btc} forces the graph of the function into a \emph{bow-tie-shaped} region. 

We emphasize that, if $\varphi\in\bt(I)$, then we have no information on the value $\varphi(0)$. We only know that $\varphi(0-)=0$ or $\varphi(0+)=0$, provided that the limit in question can be interpreted.

It is easy to see that $\bt(I)$ and $\bts(I)$ endowed with the composition of functions form a monoid and a semi-group, respectively. If $S\in\{\bt(I),\bts(I)\}$ and $\varphi_1,\varphi_2\in S$, then we denote the commutator $[\varphi_1,\varphi_2]:=\varphi_1\circ \varphi_2-\varphi_2\circ \varphi_1$ in the usual way. The sole purpose of this notation is to allow, in certain cases, a more concise yet clear formulation of the statements and proofs.

The conjugate $\alpha\circ \varphi\circ\alpha^{-1}:\alpha(I)\to\mathbb{R}$ of $\varphi\in S$ by an injective function $\alpha:I\to\mathbb{R}$ is referred to shortly as $\varphi^\alpha$. 

We shall often refer to the restriction of functions to the non-positive or non-negative half-line. Therefore, if $\varphi:D\to\mathbb{R}$, then we will write $\varphi_-:=\varphi|_{\mathbb{R}_-}$ and $\varphi_+:=\varphi|_{\mathbb{R}_+}$. (It should be noted that in the standard literature these symbols usually refer to the positive and negative parts of a function. Since no more suitable notation seems available, we shall nevertheless employ them here to denote the restriction of the function to the non-positive and non-negative half-line.)

In some of our results, we are going to study those $\mathbb{R}\to\mathbb{R}$ solutions of \eq{0} whose restriction to the non-positive half-line equals a prescribed function. It is easy to verify that $f$ is a solution if and only if $f^{\id}$ is as well. In light of this, our condition is not restrictive: solutions prescribed on the non-negative half-line are simply the $(-\mathrm{id})$-conjugates of the ones we deal with. We do not state these dual results separately and leave their reformulation to the reader.

The above special conjugate will continue to play an important role throughout. One readily checks that $\varphi\in\bt(I)$ (or $\varphi\in\bts(I)$) if and only if $\varphi^{\id}\in\bt(-I)$ (or $\varphi^{\id}\in\bts(-I)$). Since the following relations are elementary, we omit their proofs but will refer to them later:
\Eq{pmi}{
(\varphi^{\id})^{\id}=\varphi,
\qquad
(\varphi_{\mp})^{\id}=(\varphi^{\id})_{\pm},
\qquad\text{and}\qquad
(\mathrm{id}-\varphi)^{\id}=\mathrm{id}-\varphi^{\id}.
}

\section{Solutions depending on arbitrary functions}

In this section, we show that any function satisfying certain weak conditions on the non-positive half-line can be extended to a solution. The next lemma is a straightforward generalization of Proposition 1 of \cite{Kis26}.

\begin{lemm*}
A function $f\in\bt(\mathbb{R})$ is a solution of equation \eq{0} if and only if
\Eq{sol}{
[\mathrm{id}-(f_-)^{\id}, f_+]=0
\qquad\text{and}\qquad
[(f_-)^{\id}, \mathrm{id}-f_+]=0}
hold on the non-negative half-line.
\end{lemm*}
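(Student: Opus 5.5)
The plan is a direct substitution: split the real line into the non-negative and non-positive half-lines. On each half-line, the sign conditions in \eq{btc} say which restriction $f_-$ or $f_+$ is evaluated at every occurrence of $f$ in \eq{0}. Rewriting in terms of $g:=(f_-)^{\id}$, i.e.\ $g(t)=-f(-t)$ for $t\geq 0$, should turn \eq{0} on $\mathbb{R}_+$ into the first commutator identity in \eq{sol}, and on $\mathbb{R}_-$ (after $x=-t$) into the second. Since every step is an equivalence, both directions follow at once. Note that $g\in\bt(\mathbb{R}_+)$, so $0\leq g(t)\leq t$ for $t\geq0$, and likewise $0\leq f_+(t)\leq t$.

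\emph{Case $x=t>0$.} Here $f(t)=f_+(t)\in[0,t]$, so $-f(t)\leq 0$ and the right-hand side of \eq{0} is $f_-(-f_+(t))+f_+(t)=-g(f_+(t))+f_+(t)$. Also $f(-t)=-g(t)\in[-t,0]$, so $f(-t)+t=t-g(t)\in[0,t]$ and the left-hand side is $f_+(t-g(t))$. Thus \eq{0} at $x=t$ reads
\[
f_+\bigl((\mathrm{id}-g)(t)\bigr)=(\mathrm{id}-g)\bigl(f_+(t)\bigr),
\]
which is exactly $[\mathrm{id}-(f_-)^{\id},f_+](t)=0$.

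\emph{Case $x=-t<0$.} Now $f(t)=f_+(t)\in[0,t]$, so $f(t)-t\leq 0$ and the left-hand side is $f_-(f_+(t)-t)=-g\bigl((\mathrm{id}-f_+)(t)\bigr)$. Also $f(-t)=-g(t)\leq 0$, so $-f(-t)=g(t)\geq0$ and the right-hand side is $f_+(g(t))-g(t)=-(\mathrm{id}-f_+)(g(t))$. Multiplying by $-1$, \eq{0} at $x=-t$ becomes
\[
g\bigl((\mathrm{id}-f_+)(t)\bigr)=(\mathrm{id}-f_+)\bigl(g(t)\bigr),
\]
i.e.\ $[(f_-)^{\id},\mathrm{id}-f_+](t)=0$.

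The main obstacle is the point $0$. The conditions \eq{btc} carry no information at $x=0$, so $c:=f(0)$ may be nonzero; then $f(0)$, $f(-f(0))$, etc.\ need not lie in the half-lines used above. I would treat $x=0$ separately and read the commutators in \eq{sol} at $t=0$ with $f_\pm$ and $g$ taken literally as the values of $f$ and $-f(-\,\cdot)$. With $g(0)=-c$, the first identity at $0$ reads $f(c)=-c+f(c)+c$, after substituting $f_+(0)=c$ and $g(c)=-f(-c)$; this is exactly \eq{0} at $x=0$, namely $f(c)=f(-c)+c$. The second identity at $0$ reduces to the same equation. So the equivalence also holds at $0$. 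With both half-lines and the origin covered, the displays above give the equivalence of \eq{0} with \eq{sol}.
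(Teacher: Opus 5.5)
Your proof is correct, and for $x>0$ it is the same substitution the paper uses. It differs from the paper in two places worth recording.

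\emph{Negative half-line.} For $x<0$ the paper does not compute directly. It notes that $f$ satisfies \eq{0} at $-x$ if and only if $f^{-\mathrm{id}}$ satisfies it at $x$. It then applies the first half of the argument to $f^{-\mathrm{id}}$ and translates back via \eq{pmi}. Your direct computation reaches the same identity without the conjugation bookkeeping.

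\emph{The origin.} This is the more substantive difference. The paper asserts $f(x)\geq 0$ and $f(-x)+x\geq 0$ for all $x\geq 0$, but \eq{btc} does not give this at $x=0$ when $f(0)\neq 0$. In that case, with the literal domains, one of the two commutators in \eq{sol} is not even defined at $t=0$. You treat the origin separately, reading $f_+$ and $(f_-)^{-\mathrm{id}}$ as $f$ and $-f(-\,\cdot\,)$. That is what is needed to make the statement meaningful and true there.

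Two small slips to fix:
\begin{enumerate}
\item The bound $0\leq g(t)\leq t$ holds only for $t>0$, not for $t\geq 0$. You implicitly acknowledge this later when you set $g(0)=-c$.
\item The intermediate line at the origin should read $f(c)=c-g(c)=c+f(-c)$. As written, $f(c)=-c+f(c)+c$ is a tautology and does not support the conclusion you draw from it.
\end{enumerate}
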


\begin{proof}
Assume that $f$ solves equation \eq{0}. If $x\geq 0$, then by \eq{btc}, we have $f(x)\geq 0$ and $f(-x)+x\geq 0$. Therefore the left and right hand side of equation \eq{0} can be written as
\Eq{*}{
f(f(-x)+x)=f_+(x-(f_-)^{\id}(x))
\quad\text{and}\quad
f(-f(x))+f(x)=f_+(x)-(f_-)^{\id}(f_+(x)),
}
respectively. This shows that the first part of \eq{sol} holds true. Conversely, if the first identity of \eq{sol} is valid, reversing the above argument, we get that $f$ satisfies \eq{0} for $x\geq 0$.

To complete the proof, it is enough to use the fact that $f$ satisfies \eq{0} for $x\leq 0$ if and only if $f^{\id}$ satisfies \eq{0} for $x\geq 0$. But this, in view of the previous part of the proof, is equivalent to $[\mathrm{id}-((f^{\id})_-)^{\id}, (f^{\id})_+]=0$. By \eq{pmi}, we have
\Eq{*}{
\mathrm{id}-((f^{\id})_-)^{\id}
=\mathrm{id}-((f_+)^{\id})^{\id}
=\mathrm{id}-f_+
\qquad\text{and}\qquad
(f^{\id})_+=(f_-)^{\id},
}
which completes the argument.
\end{proof}

The above result has a striking consequence for the abundance of solutions to \eq{0}. This demonstrates that a complete description of all solutions (even continuous ones) to our equation is impossible.

\begin{coro}
Let $\varphi\in\bt^\#(\mathbb{R}_-)$ be any function. Then $f:\mathbb{R}\to\mathbb{R}$ defined by
\Eq{*}{
f(x)=\varphi_-(x)\quad\text{if}\quad x\leq 0
\qquad\text{and}\qquad
f(x)=x-(\varphi_-)^{\id}(x)\quad\text{if}\quad 0\leq x}
solves functional equation \eq{0}.
\end{coro}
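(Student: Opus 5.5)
The plan is to apply the Lemma directly: check that $f\in\bt(\mathbb{R})$, then check the two commutator identities in \eq{sol}. Throughout, write $\psi:=(\varphi_-)^{\id}$. This function is defined on $\mathbb{R}_+$ by $\psi(x)=-\varphi(-x)$, and by the remark preceding \eq{pmi} it belongs to $\bts(\mathbb{R}_+)$.

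First, $f$ is well defined at $0$. The two formulas give $f(0)=\varphi(0)$ and $f(0)=0-\psi(0)=\varphi(0)$, so they agree. Moreover $f_-=\varphi_-$, hence $(f_-)^{\id}=\psi$, and $f_+=\mathrm{id}-\psi$.

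Next, $f\in\bt(\mathbb{R})$.
\begin{itemize}
\item For $x\le 0$ both inequalities in \eq{btc} hold because $\varphi\in\bt(\mathbb{R}_-)$.
\item For $x\ge 0$ we have $xf(x)=x(x-\psi(x))\ge 0$ and $x(x-f(x))=x\psi(x)\ge 0$. These are exactly the two inequalities \eq{btc} for $\psi$ on $\mathbb{R}_+$.
\end{itemize}

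With these identifications, the conditions in \eq{sol} become
\Eq{*}{
[\mathrm{id}-\psi,\ \mathrm{id}-\psi]=0
\qquad\text{and}\qquad
[\psi,\ \mathrm{id}-(\mathrm{id}-\psi)]=[\psi,\psi]=0 .
}
Both hold trivially, since a function always commutes with itself. The Lemma then yields that $f$ solves \eq{0}.

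The only point needing care is whether the compositions in \eq{sol} make sense on all of $\mathbb{R}_+$. In the proof of the Lemma one uses that $f_+$ and $\mathrm{id}-(f_-)^{\id}$ map $\mathbb{R}_+$ into $\mathbb{R}_+$. For $x>0$ this follows from $\psi\in\bts(\mathbb{R}_+)$, which gives $0<x-\psi(x)<x$ and $0<\psi(x)<x$. At $x=0$, however, the value $\varphi(0)$ is unconstrained. I would therefore treat $x=0$ separately by checking \eq{0} there directly. Put $a:=f(0)=\varphi(0)$.
\begin{itemize}
\item If $a\le 0$: $f(f(0))=\varphi(a)$, while $f(-a)+a=(-a-\psi(-a))+a=\varphi(a)$.
\item If $a>0$: $f(f(0))=a-\psi(a)=a+\varphi(-a)$, while $f(-a)+a=\varphi(-a)+a$.
\end{itemize}
So \eq{0} holds at $0$ in either case.

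I expect this bookkeeping around the origin to be the only step that is not immediate. To handle it cleanly, I would apply the Lemma's argument for $x>0$ and for $x<0$ (the latter via the $(-\mathrm{id})$-conjugate). The point $x=0$ is then covered by the direct computation above.
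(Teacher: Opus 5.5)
Your proposal is correct and follows the paper's proof: via the Lemma, the two commutator conditions reduce to the trivial identities $[\mathrm{id}-\psi,\mathrm{id}-\psi]=0$ and $[\psi,\psi]=0$. Your extra checks (that $f\in\mathrm{BT}(\mathbb{R})$, that the two formulas agree at $0$, and the direct verification of the equation at $x=0$) are a genuine refinement. This is because $f(0)=\varphi(0)$ is unconstrained, while the Lemma's argument uses $f(x)\ge 0$ for all $x\ge 0$ and so does not actually cover the origin --- a point the paper passes over.
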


\begin{proof}
By the definition of $f$, we have
\Eq{*}{
\mathrm{id}-(f_-)^{\id}=\mathrm{id}-(\varphi_-)^{\id}=f_+
\quad\text{and}\quad
\mathrm{id}-f_+=\mathrm{id}-(\mathrm{id}-(\varphi_-)^{\id})=(f_-)^{\id}.}
Since any function of $\mathrm{BT}(\mathbb{R}_+)$ commutes with itself over the non-negative half-line, \eq{sol} of the Lemma is satisfied. Consequently, $f$ solves \eq{0}.
\end{proof}

\section{Strictly nonexpansive, strictly monotone solutions}

Although the family of solutions is rather rich, the corresponding solutions are nevertheless well-characterizable for certain functions. For the sake of simplicity, from here on, denote $X:=\mathbb{R}_+\setminus\{0\}$.

\begin{prop}\label{Comm}
Let $g,h:X\to X$ be such that $g$ is continuous and strictly increasing. Then $[g,h]=0$ if and only if there exist a constant $\omega>0$, a decreasing homeomorphism $\alpha:X\to\mathbb{R}$ and a function $P:\mathbb{R}\to\mathbb{R}$ periodic with $\omega$, such that
\Eq{mo}{
g(x)=\alpha^{-1}(\alpha(x)+\omega)
\qquad\text{and}\qquad
h(x)=\alpha^{-1}(P(\alpha(x))+\alpha(x)),\qquad x>0.}
\end{prop}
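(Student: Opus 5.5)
The plan is to treat \eq{mo} as a conjugation of $g$ to the translation $t\mapsto t+\omega$ on $\mathbb{R}$, i.e.\ an Abel equation $\alpha(g(x))=\alpha(x)+\omega$. Once $g$ is conjugated this way, commuting with $g$ becomes commuting with a translation. The two directions are unequal in difficulty. Sufficiency is a one-line computation. Necessity requires constructing $\alpha$.

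One caveat has to be settled first. Since $\alpha$ is a decreasing homeomorphism of $X$ onto $\mathbb{R}$ and $\omega>0$, any $g$ of the form \eq{mo} is a homeomorphism of $X$ onto itself with $g(x)<x$ for all $x>0$. Consequently, the ``only if'' direction cannot hold for every continuous strictly increasing $g$: for instance $g=\mathrm{id}$ commutes with every $h$. I will therefore assume, as the setting of this section (strictly nonexpansive functions) suggests, that $g(x)<x$ for all $x>0$ and that $g$ maps $X$ onto $X$. I would state these assumptions explicitly in the proof.

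\emph{Sufficiency.} Suppose \eq{mo} holds and write $t=\alpha(x)$. Then
\[
g(h(x))=\alpha^{-1}\bigl(P(t)+t+\omega\bigr)
\qquad\text{and}\qquad
h(g(x))=\alpha^{-1}\bigl(P(t+\omega)+t+\omega\bigr).
\]
These coincide by the $\omega$-periodicity of $P$, so $[g,h]=0$.

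\emph{Necessity.} Take $\omega=1$ and proceed in four steps.
\begin{enumerate}
\item Fix $x_0>0$. Then $g(x_0)<x_0$, so $J=[g(x_0),x_0)$ is a fundamental domain. Let $\alpha$ on $[g(x_0),x_0]$ be any decreasing homeomorphism onto $[0,1]$ with $\alpha(x_0)=0$ and $\alpha(g(x_0))=1$.
\item Extend $\alpha$ by $\alpha(g^n(y))=\alpha(y)+n$ for $y\in J$ and $n\in\mathbb{Z}$; here $g^{-1}$ exists because $g$ is a bijection of $X$.
\item Show that the sets $g^n(J)$, $n\in\mathbb{Z}$, tile $X$, so that $\alpha$ is a well-defined decreasing homeomorphism of $X$ onto $\mathbb{R}$. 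The sequence $g^n(x_0)$ is strictly decreasing in $n$. Its limit as $n\to\infty$ is a fixed point of $g$ in $[0,x_0)$ by continuity, hence it must be $0$. Its limit as $n\to-\infty$ cannot be finite: a finite limit $L$ would satisfy $g(L)=L$, again by continuity, contradicting $g(L)<L$. So that limit is $+\infty$. Continuity of $\alpha$ at the endpoints $g^n(x_0)$ follows from matching the boundary values chosen in step 1.
\item Set $Q:=\alpha\circ h\circ\alpha^{-1}:\mathbb{R}\to\mathbb{R}$. Since $\alpha\circ g\circ\alpha^{-1}=\mathrm{id}+1$, the relation $g\circ h=h\circ g$ becomes $Q(t+1)=Q(t)+1$. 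Hence $P:=Q-\mathrm{id}$ is $1$-periodic, and $h=\alpha^{-1}\circ(P+\mathrm{id})\circ\alpha$, which is \eq{mo}.
\end{enumerate}

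I expect the main obstacle to be step 3, namely checking that the orbit of the fundamental domain exhausts $X$ and that $\alpha$ is a homeomorphism onto all of $\mathbb{R}$. This is exactly where the hypotheses $g(x)<x$ and surjectivity of $g$ are indispensable. No regularity of $h$ is needed, since $P$ is allowed to be an arbitrary periodic function.
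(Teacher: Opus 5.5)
Your proof is correct and follows the paper's route: conjugate $g$ to the translation $t\mapsto t+\omega$ by an Abel function $\alpha$, so that $[g,h]=0$ says exactly that $\alpha\circ h\circ\alpha^{-1}-\mathrm{id}$ is $\omega$-periodic. The one difference is that you build $\alpha$ by hand on the fundamental domain $[g(x_0),x_0)$, whereas the paper cites Theorem 2.3.8 of Kuczma--Choczewski--Ger.

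Your caveat is also well founded. The representation forces $g$ to be a homeomorphism of $X$ onto itself with $g<\mathrm{id}$, so the ``only if'' part fails as literally stated (e.g.\ for $g=\mathrm{id}$, or for the non-surjective $g(x)=x/(1+x)$ with $h=g$). Consequently the paper's appeal to the cited existence theorem tacitly needs exactly the two hypotheses you add.
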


\begin{proof}
Assume that $[g,h]=0$ holds. Due to our conditions on $g$, by Theorem 2.3.8 of the book \cite{KucChoGer90}, for given $\omega>0$, there exists a homeomorphism $\alpha:X\to\mathbb{R}$ such that $\alpha(g(x))=\alpha(x)+\omega$ holds for any $x>0$. Hence the first part of \eq{mo} follows. Then it is also easy to see that $[g^\alpha, h^\alpha]=0$, which is equivalent to writing that $h^\alpha(u+\omega)=h^\alpha(u)+\omega$ holds for all $u\in\mathbb{R}$. Define $P:\mathbb{R}\to\mathbb{R}$ by $P(u):=h^\alpha(u)-u$. Then
\Eq{*}{
P(u+\omega)
=h^\alpha(u+\omega)-(u+\omega)
=h^\alpha(u)+\omega-(u+\omega)
=P(u),\qquad u\in\mathbb{R}.
}
Expressing $h$ in $h^\alpha=P+\mathrm{id}$, we obtain the second part of assertion \eq{mo}.

As for the converse, let $x>0$ be any. Then, by \eq{mo}, we have that
\Eq{*}{
g(h(x))=\alpha^{-1}(P(\alpha(x))+\alpha(x)+\omega)=\alpha^{-1}(P(\alpha(x)+\omega)+\alpha(x)+\omega)=h(g(x)),}
which shows that $g$ and $h$ commute.
\end{proof}

Note that the function $\alpha$ is referred to as the \emph{Abel function} of $g$.

In what follows, we wish to apply Proposition \ref{Comm} to the functions appearing in the Lemma. In order to do this, first we introduce the following concept. We say that a function $\varphi:J\subseteq\mathbb{R}\to\mathbb{R}$ is \emph{strictly nonexpansive} (or \emph{strictly $1$-Lipschitz}) if, for all $x,y\in J$ with $x\neq y$, we have $|\varphi(x)-\varphi(y)|<|x-y|$. 

Obviously, strictly nonexpansive functions are always continuous. It is also easy to verify that a strictly increasing function $\varphi$ is strictly nonexpansive if and only if its displacement map $\mathrm{id}-\varphi$ is strictly increasing.

\begin{thm}
Let $f\in\bts(\mathrm{\mathbb{R}})$ be strictly increasing and strictly nonexpansive. Then $f$ solves functional equation \eq{0} if and only if there exist constants $\rho,\omega>0$, decreasing homeomorphisms $\alpha,\beta:X\to\mathbb{R}$ and positive functions $P:\mathbb{R}\to\mathbb{R}$ and $Q:\mathbb{R}\to\mathbb{R}$ periodic with $\rho$ and $\omega$, respectively, such that
\Eq{A}{
f(x)
=-\alpha^{-1}(\alpha(-x)+\rho)
=x+\beta^{-1}(\beta(-x)+\omega),\qquad x<0
}
and
\Eq{B}{
f(x)
=x-\alpha^{-1}(P(\alpha(x))+\alpha(x))
=\beta^{-1}(Q(\beta(x))+\beta(x)),
\qquad x>0.
}
\end{thm}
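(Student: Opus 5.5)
The plan is to reduce everything to the Lemma and then apply Proposition \ref{Comm} twice, once to each of the two commutator identities in \eq{sol}. Since $f\in\bts(\mathbb{R})\subseteq\bt(\mathbb{R})$, the Lemma says that $f$ solves \eq{0} if and only if
\Eq{*}{
[\mathrm{id}-(f_-)^{\id}, f_+]=0
\qquad\text{and}\qquad
[(f_-)^{\id}, \mathrm{id}-f_+]=0
}
on $\mathbb{R}_+$. At the point $0$ both identities hold automatically. Indeed, $f$ is continuous, being strictly nonexpansive, and $f(0-)=0$, so $f(0)=0$; hence all four maps involved fix $0$. It therefore suffices to work on $X$.

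\textbf{Checking the hypotheses of Proposition \ref{Comm}.} For $x>0$ we have $(f_-)^{\id}(x)=-f(-x)$.
\begin{itemize}
\item[(i)] The strict bow-tie inequalities at $-x<0$ give $-x<f(-x)<0$, i.e.\ $0<-f(-x)<x$. So $g_1:=(f_-)^{\id}$ maps $X$ into $X$, is continuous and strictly increasing, and satisfies $g_1(x)<x$.
\item[(ii)] Consequently $g_2:=\mathrm{id}-(f_-)^{\id}$, that is $x\mapsto x+f(-x)$, also maps $X$ into $X$ with $g_2(x)<x$. It is continuous, and it is strictly increasing because $f$ is strictly increasing and strictly nonexpansive: its displacement $\mathrm{id}-f$ is strictly increasing.
\item[(iii)] Likewise $h_2:=f_+$ and $h_1:=\mathrm{id}-f_+$ map $X$ into $X$ with $h_i(x)<x$, by the strict inequalities in \eq{btc} for $x>0$.
\end{itemize}
Both pairs $(g_1,h_1)$ and $(g_2,h_2)$ thus satisfy the assumptions of Proposition \ref{Comm}.

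\textbf{Necessity.} Apply Proposition \ref{Comm} to $(g_1,h_1)$. This yields $\rho>0$, a decreasing homeomorphism $\alpha$ and a $\rho$-periodic $P$ with
\Eq{*}{
-f(-x)=\alpha^{-1}(\alpha(x)+\rho)
\qquad\text{and}\qquad
x-f(x)=\alpha^{-1}(P(\alpha(x))+\alpha(x)),\qquad x>0.
}
Substituting $x\mapsto -x$ in the first identity gives the first form in \eq{A}; the second identity gives the first form in \eq{B}.

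Next apply Proposition \ref{Comm} to $(g_2,h_2)$. This yields $\omega$, $\beta$ and $Q$ with
\Eq{*}{
x+f(-x)=\beta^{-1}(\beta(x)+\omega)
\qquad\text{and}\qquad
f(x)=\beta^{-1}(Q(\beta(x))+\beta(x)),\qquad x>0.
}
The substitution $x\mapsto-x$ again turns the first identity into the second form of \eq{A}.

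Positivity of $P$ and $Q$ comes from $h_i(x)<x$ together with $\alpha,\beta$ being decreasing. For instance, $\alpha(h_1(x))>\alpha(x)$ means $P(\alpha(x))>0$. Since $\alpha$ is onto $\mathbb{R}$, this gives $P>0$ everywhere, and the same argument gives $Q>0$.

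\textbf{Sufficiency.} The converse direction is immediate. Reading \eq{A} and \eq{B} backwards expresses $g_1,h_1,g_2,h_2$ exactly in the form \eq{mo}. The "if" part of Proposition \ref{Comm} then gives both commutator identities on $X$, the point $0$ was handled above, and the Lemma concludes.

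The main obstacle I expect is the step where Proposition \ref{Comm} is invoked: one must check that the Abel functions can indeed be taken decreasing with positive translation constants $\rho,\omega$. This is where the sign conditions $g_i(x)<x$ from $\bts$ are essential, and it is the only place where the cited existence theorem from \cite{KucChoGer90} enters.
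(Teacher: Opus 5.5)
Your argument follows the paper's proof step for step: the Lemma, then Proposition~\ref{Comm} for the pairs $\bigl((f_-)^{-\mathrm{id}},\mathrm{id}-f_+\bigr)$ and $\bigl(\mathrm{id}-(f_-)^{-\mathrm{id}},f_+\bigr)$, then positivity of $P,Q$ from $h_i<\mathrm{id}$. Your sufficiency half is fine. The necessity half has a genuine gap, and the paper's proof has the same gap, because it sits inside Proposition~\ref{Comm}. Suppose $\alpha$ is a homeomorphism of $X$ onto $\mathbb{R}$ and $g=\alpha^{-1}\circ(\alpha+\rho)$. You use ontoness when you conclude $P>0$ everywhere, and the proof of Proposition~\ref{Comm} uses it when it defines $P$ on all of $\mathbb{R}$. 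Then $g(X)=\alpha^{-1}(\mathbb{R}+\rho)=X$, so $g$ must map $X$ \emph{onto} $X$. You rightly flag $0<g_i<\mathrm{id}$ as essential, but it is not enough. For continuous strictly increasing $g$ with $0<g<\mathrm{id}$, the Abel equation $\alpha\circ g=\alpha+\rho$ does have a continuous decreasing solution on $X$. However, if $g$ is bounded with $b=\sup g$, then $\alpha(x)=\alpha(g(x))-\rho\to\alpha(b)-\rho$ as $x\to\infty$, so $\alpha(X)$ is a half-line. Nothing in the hypotheses forces $g_1(x)=-f(-x)$ or $g_2(x)=x+f(-x)$ to be unbounded.

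In fact the necessity claim is false as stated. Take $\varphi(x)=e^x-1$ in the Corollary:
\[
f(x)=e^x-1\quad(x\le 0),\qquad f(x)=x-1+e^{-x}\quad(x\ge 0).
\]
Then $f\in\bts(\mathbb{R})$, and $0<f'<1$ off the origin, so $f$ is strictly increasing and strictly nonexpansive. By the Corollary, $f$ solves \eq{0}; a direct check confirms this. Yet $g_1(x)=1-e^{-x}$ maps $X$ onto $(0,1)$, so the first form in \eq{A} cannot hold. To repair your proof, and the statement, you have two options. One is to add the hypotheses $f(x)\to-\infty$ and $x-f(x)\to-\infty$ as $x\to-\infty$, which make $g_1,g_2$ self-homeomorphisms of $X$. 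The other is to allow $\alpha,\beta$ to be decreasing homeomorphisms of $X$ onto half-lines $(c,\infty)$ with $c\in[-\infty,\infty)$. In that case $P,Q$ are the periodic extensions of $h_1^\alpha-\mathrm{id}$ and $h_2^\beta-\mathrm{id}$. Positivity of $P,Q$ on $\mathbb{R}$ then follows because $\alpha(X)$ and $\beta(X)$ each contain a full period. With either amendment the rest of your argument goes through unchanged.
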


\begin{proof}
There is no need to prove the statement in both directions. Instead, we will establish mutually equivalent statements.

In view of our Lemma, $f$ is a solution to the equation \eq{0} if and only if
\Eq{comm}{
[\mathrm{id}-(f_-)^{\id}, f_+]=0
\qquad\text{and}\qquad
[(f_-)^{\id}, \mathrm{id}-f_+]=0}
are valid over the non-negative half-line. Since $f$ is continuous, $f(0)=0$ follows, hence it is enough to ensure that \eq{comm} is valid on the positive half-line. However, the latter -- upon applying Proposition \ref{Comm} with the choices $g:=(f_-)^{\id}|_X$, $h:=\big(\mathrm{id}-f_+\big)|_X$ and $g:=\big(\mathrm{id}-(f_-)^{\id}\big)|_X$, $h:=f_+|_X$, respectively -- is equivalent to \eq{A} and \eq{B} for some constants $\rho,\omega>0$, some decreasing homeomorphisms $\alpha,\beta:X\to\mathbb{R}$, and functions $P:\mathbb{R}\to\mathbb{R}$ periodic with $\rho$ and $Q:\mathbb{R}\to\mathbb{R}$ periodic with $\omega$.

To show that $P$ is positive, let $u\in\mathbb{R}$ be any. Since $f(\alpha^{-1}(u))>0$, then, in view of the first representation of $f$ in \eq{B}, we obtain that $\alpha^{-1}(P(u)+u)<\alpha^{-1}(u)$. Using that $\alpha$ is strictly decreasing, it follows that $P(u)>0$. The assertion for $Q$ can be proved in a similar manner.
\end{proof}

Evidently, the preceding theorem is just a reformulation of our Lemma whenever this can be done. The ultimate objective would be to establish the connection between the functions $\alpha$, $\beta$ and $P$, $Q$ involved in the representations. Furthermore, we note that in the above theorem, it is sufficient to assume that $f$ is strictly nonexpansive and strictly increasing only on the non-positive or non-negative half-line.






\section{Generated solutions}

In the last part of the paper, we address a phenomenon that was already encountered in \cite{Kis26}, though it was not explicitly discussed there. Suppose we seek the solution to \eq{0} by prescribing its form on the non-positive half-line. We shall refer this part as the \emph{generator} of the solutions. According to Proposition 1 of \cite{Kis26}, if the generator is positively homogeneous and the function $f$ satisfies the strict bow-tie condition, it is a solution if and only if $f_+$ is homogeneous with respect to $a$ and $1-a$, where $a$ is the slope of the prescribed branch. Naturally, any positively homogeneous function satisfies this latter condition, and it turns out to be the only possibility when the ratio $\frac{\ln a}{\ln(1-a)}$ is irrational. If $\frac{\ln a}{\ln(1-a)}$ is rational, however, additional exotic extensions become possible (see Figure \ref{fig:elso_kep} below).

\begin{figure}[htbp]
    \centering
    \includegraphics[width=0.7\textwidth]{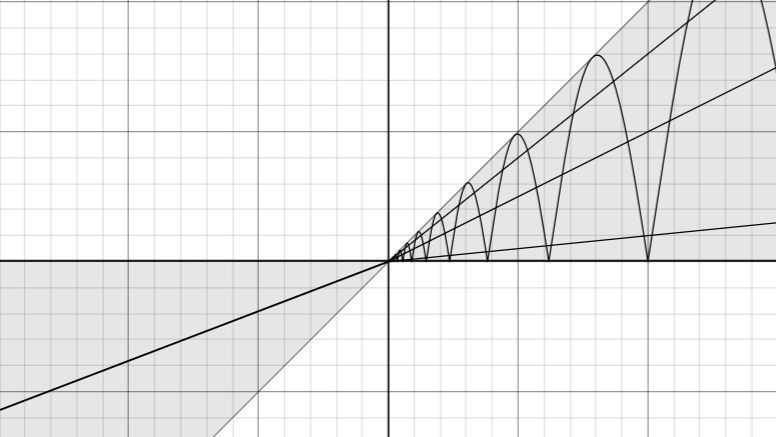} 
    
    \caption{\small If the slope of the left-hand generator is $a_0=\frac{3-\sqrt{5}}{2}$, that is, the ratio $\frac{\ln a_0}{\ln(1-a_0)}$ equals $2$, then on the right-hand side, further branches emerge beyond the 'half-lines'. The formula for the non-positively homogeneous branch is $x\mapsto x\cdot\big|\sin\big(\frac{2\pi}{\ln a_0}\ln x\big)\big|$ for $x>0$.}
    
    \label{fig:elso_kep}
\end{figure}


Obviously, in that special case, we obtain the same family of solutions if the slope of the generator is $1-a$, or equivalently, if we replace the original generator with its displacement map.

For a given function $\gamma\in\bt(\mathbb{R}_-)$, we say that $f\in\bt(\mathbb{R})$ belongs to the family $\mathrm{Sol}(\gamma)$ if and only if $f_-=\gamma_-$ and $f$ solves equation \eq{0}. Furthermore $\mathrm{Sol}^\#(\gamma):=\mathrm{Sol}(\gamma)\cap\bts(\mathbb{R})$. We note that in this latter case we naturally assume that $\gamma\in\bts(\mathbb{R})$. In both cases, the function $\gamma$ is called the \emph{generator} of the family of solutions in question.

In the remaining part, we investigate the general relationship between the solution families generated by a given function $\gamma$ and its displacement map $\mathrm{id}-\gamma$. The main theorem of the section characterizes the case when these two families coincide.

\begin{prop}\label{SolIdentity}
Let $\gamma\in\bt(\mathbb{R}_-)$. Then, for the function classes just introduced, we have
\Eq{*}{
\mathrm{id}-\mathrm{Sol}(\gamma)=\mathrm{Sol}(\mathrm{id}-\gamma)
\qquad\text{and}\qquad
\mathrm{id}-\mathrm{Sol}^\#(\gamma)=\mathrm{Sol}^\#(\mathrm{id}-\gamma).
}
\end{prop}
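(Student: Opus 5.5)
The plan is to show that the map $f\mapsto \mathrm{id}-f$ is an involution of $\bt(\mathbb{R})$ that preserves $\bts(\mathbb{R})$, sends solutions of \eq{0} to solutions, and turns the prescribed restriction $\gamma_-$ into $(\mathrm{id}-\gamma)_-$. Since the map is its own inverse, it then suffices to prove the inclusion $\mathrm{id}-\mathrm{Sol}(\gamma)\subseteq\mathrm{Sol}(\mathrm{id}-\gamma)$ for every $\gamma\in\bt(\mathbb{R}_-)$. Applying this inclusion to $\mathrm{id}-\gamma$ in place of $\gamma$ (which is legitimate by the first step) gives $\mathrm{id}-\mathrm{Sol}(\mathrm{id}-\gamma)\subseteq\mathrm{Sol}(\gamma)$, and applying $\mathrm{id}-\cdot$ to both sides yields the reverse inclusion.

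First, the bow-tie conditions. The two inequalities in \eq{btc} for $\varphi$ read $x\varphi(x)\ge 0$ and $x(x-\varphi(x))\ge0$. For $\psi=\mathrm{id}-\varphi$ they become $x(x-\varphi(x))\ge0$ and $x\varphi(x)\ge0$, so the two conditions are simply swapped. Hence $\varphi\in\bt(I)$ if and only if $\mathrm{id}-\varphi\in\bt(I)$, and the same holds for $\bts(I)$, since strictness of the two inequalities is also swapped. In particular $\mathrm{id}-\gamma\in\bt(\mathbb{R}_-)$, and $(\mathrm{id}-f)_-=\mathrm{id}-f_-=\mathrm{id}-\gamma_-$.

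The main step is to check that $\mathrm{id}-f$ solves \eq{0} whenever $f\in\bt(\mathbb{R})$ does, and I would do this through the Lemma. Write $g:=\mathrm{id}-f$. Then $g_\pm=\mathrm{id}-f_\pm$, and by \eq{pmi} we have $(g_-)^{\id}=(\mathrm{id}-f_-)^{\id}=\mathrm{id}-(f_-)^{\id}$. Substituting these into \eq{sol} for $g$, the first condition $[\mathrm{id}-(g_-)^{\id},g_+]=0$ becomes $[(f_-)^{\id},\mathrm{id}-f_+]=0$, which is the second condition of \eq{sol} for $f$. The second condition $[(g_-)^{\id},\mathrm{id}-g_+]=0$ becomes $[\mathrm{id}-(f_-)^{\id},f_+]=0$, which is the first condition for $f$. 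So \eq{sol} for $g$ is exactly \eq{sol} for $f$ with its two identities interchanged, and the Lemma gives that $g$ is a solution if and only if $f$ is.

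The only delicate point is bookkeeping. One must make sure that \eq{pmi} applies to the restricted functions $f_-$, which are defined only on $\mathbb{R}_-$, so that $(f_-)^{\id}$ lives on $\mathbb{R}_+$. One must also check that the compositions in \eq{sol} are well defined on the non-negative half-line; this holds because all the functions involved lie in $\bt(\mathbb{R}_+)$, by the first step together with the conjugation remark. With these checks in place, the two inclusions and the involution property give both equalities, the $\#$ version included.
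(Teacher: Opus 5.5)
Your proposal is correct and follows essentially the same route as the paper: via the Lemma and the third identity in \eq{pmi}, the two commutator conditions in \eq{sol} for $\mathrm{id}-f$ are exactly those for $f$ with the two identities interchanged, and the $\#$ version follows because $\varphi\mapsto\mathrm{id}-\varphi$ simply swaps the two (strict) inequalities in \eq{btc}. The only cosmetic difference is that the paper writes the conditions directly in terms of $\gamma_-=f_-$ as a single chain of equivalences; your separate involution argument is therefore unnecessary, since your main step already gives an equivalence rather than just an inclusion.
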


\begin{proof}
By the Lemma, we have $f\in\mathrm{Sol}(\gamma)$ if and only if
\Eq{*}{
[\mathrm{id}-(\gamma_-)^{\id}, f_+]=0
\qquad\text{and}\qquad
[(\gamma_-)^{\id}, \mathrm{id}-f_+]=0.
}
Applying the third identity in \eq{pmi}, the above system then is equivalent to
\Eq{*}{
[(\mathrm{id}-\gamma_-)^{\id}, f_+]=0
\qquad\text{and}\qquad
[\mathrm{id}-(\mathrm{id}-\gamma_-)^{\id}, \mathrm{id}-f_+]=0.} 
By the Lemma again, these together are equivalent to saying that $\mathrm{id}-f\in\mathrm{Sol}(\mathrm{id}-\gamma)$.

Obviously, $f\in\bts(\mathbb{R})$ if and only if $\mathrm{id}-f\in\bts(\mathbb{R})$, thus the second identity in our assertion holds as well.
\end{proof}

In the proof of Thorem 2 we shall heavily rely on the Theorem quoted below, which we formulated based on what is described in the paper \cite{Sab90} by M. Sablik. For related results see \cite[Dhombres]{Dho70}, \cite[Zdun]{Zdu72}, \cite[Forti]{For83}, \cite[Sablik]{Sab85} or \cite[Paneah]{Pan07}.

\begin{thm*}
Let $r_1, r_2:X\to X$ and $F:X\to\mathbb{R}$ be functions such that $r_1$ and $r_2$ are continuous with $0<r_1, r_2<\mathrm{id}$ and $r_1+r_2=\mathrm{id}$, and that the limit $\lim_{x\to 0^+}\frac{F(x)}{x}$ exists and finite. Then
\Eq{sab}{
F(x)=F(r_1(x))+F(r_2(x)),\qquad x>0
}
is satisfied if and only if there exists a constant $a\in\mathbb{R}$ such that $F(x)=ax$ for $x>0$.
\end{thm*}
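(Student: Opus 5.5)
The approach is to subtract the linear part predicted by the limit at $0$ and show that the remainder vanishes, by iterating \eq{sab} along a finite binary tree whose leaves are all small.

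Sufficiency is immediate: if $F(x)=ax$, then $F(r_1(x))+F(r_2(x))=a(r_1(x)+r_2(x))=ax=F(x)$ because $r_1+r_2=\mathrm{id}$. For necessity, put $L:=\lim_{x\to0^+}F(x)/x$ and $G(x):=F(x)-Lx$ for $x>0$. By the same identity $r_1+r_2=\mathrm{id}$, the function $G$ again satisfies \eq{sab}, and now $G(x)/x\to0$ as $x\to0^+$. It therefore suffices to show $G\equiv0$ on $X$, since then $F(x)=Lx$.

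Fix $x>0$ and $\varepsilon>0$, and choose $\delta\in(0,x)$ such that $|G(y)|\le\varepsilon y$ for all $0<y<\delta$. Define $m(t):=\max\{r_1(t),r_2(t)\}$. This function is continuous with $m(t)<t$, so
\[
c:=\min_{t\in[\delta,x]}\bigl(t-m(t)\bigr)>0 .
\]
Now build a finite binary tree rooted at $x$. Any node with value $t\ge\delta$ is split into the two children $r_1(t)$ and $r_2(t)$; any node with value $t<\delta$ is left as a leaf. Since $r_1,r_2$ map $X$ into $X$, every value stays positive. Along each branch the values lie in $(0,x]$, and each split from a value in $[\delta,x]$ lowers the value by at least $c$. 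Hence every branch has length at most $x/c+1$, so the tree is finite.

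Applying \eq{sab} at every internal node gives $G(x)=\sum_j G(y_j)$, where $y_j$ runs over the leaves. Applying $r_1+r_2=\mathrm{id}$ at every internal node gives $\sum_j y_j=x$. Since every leaf satisfies $0<y_j<\delta$,
\[
|G(x)|\le\sum_j|G(y_j)|\le\varepsilon\sum_j y_j=\varepsilon x .
\]
Since $\varepsilon>0$ is arbitrary, $G(x)=0$, which completes the argument.

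The main obstacle is making sure the iteration terminates with all leaves uniformly small. Iterating to a fixed depth $n$ would require $\max_{|w|=n}r_w(x)\to0$, which is awkward to prove directly. The stopping-rule tree avoids this, with finiteness coming from the positive gap $c$ on the compact interval $[\delta,x]$, which in turn uses the continuity of $r_1,r_2$ and the strict inequality $r_i<\mathrm{id}$.
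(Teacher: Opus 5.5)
Your proof is correct. The paper does not prove this statement at all: it quotes it as a black box from Sablik's work \cite{Sab90} and uses it only to finish the proof of Theorem 2. So your argument is a genuinely different route, namely a self-contained one.

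All the steps check out:
\begin{enumerate}[{\rm (1)}]
\item $G=F-L\,\mathrm{id}$ still satisfies \eq{sab} because $r_1+r_2=\mathrm{id}$, and $G(y)/y\to0$ as $y\to0^+$.
\item The gap $c$ is positive because $t-m(t)=\min\{r_1(t),r_2(t)\}$ is a continuous positive function on the compact interval $[\delta,x]$.
\item Every branch has fewer than $x/c+1$ splits, so the binary tree is finite.
\item The two telescoping identities $G(x)=\sum_j G(y_j)$ and $\sum_j y_j=x$, with $0<y_j<\delta$, give $|G(x)|\le\varepsilon x$.
\end{enumerate}

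What your route buys is transparency about the hypotheses. Continuity of $r_1,r_2$ enters only through $c>0$. The relation $r_1+r_2=\mathrm{id}$ enters only through the linearity of solutions and the fact that the leaves sum to $x$. The finite limit at $0^+$ is used only in the leaf estimate. Your proof also makes Theorem 2 of the paper independent of the external reference. The citation, in turn, saves space and ties the result to the existing literature on such equations.

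A side remark: the fixed-depth iteration you called awkward is also easy to handle. Extend $m$ continuously by $m(0)=0$ and take its running maximum $M(t):=\sup_{0<s\le t}m(s)$. Then $M$ is continuous, nondecreasing and satisfies $M(t)<t$ for $t>0$, so by induction $\max_{|w|=n}r_w(x)\le M^n(x)\to0$. Your stopping rule packages the same compactness argument more economically.
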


\begin{thm}
For $\gamma\in\bts(\mathbb{R}_-)$, the following statements are equivalent.
\begin{enumerate}[{\rm (i)}]\itemsep=1mm
\item\label{i} There exists $0<a<1$ such that $\gamma(x)=ax$ for $x\leq 0$.
\item\label{ii} The family $\mathrm{Sol}^\#(\gamma)$ contains a continuous function, the limit $\lim_{x\to 0^-}\frac{\gamma(x)}{x}$ exists and finite, and
\Eq{same}{\mathrm{Sol}(\gamma)=\mathrm{Sol}(\mathrm{id}-\gamma).}
\end{enumerate}
\end{thm}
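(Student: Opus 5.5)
The plan is to prove (i)$\Rightarrow$(ii) by direct computation with the Lemma, and (ii)$\Rightarrow$(i) by turning the two commutation relations of the Lemma into Sablik's equation \eq{sab}. Throughout I read \eq{same} the way the discussion before Proposition~\ref{SolIdentity} suggests. Taken literally the two families cannot coincide unless $\gamma=\mathrm{id}-\gamma$, because their members have different left branches. So I take \eq{same} to mean that the two families have the same right branches:
\Eq{*}{\{f_+: f\in\mathrm{Sol}(\gamma)\}=\{f_+: f\in\mathrm{Sol}(\mathrm{id}-\gamma)\}.}
Write $g:=(\gamma_-)^{\id}|_X$, so that $g(x)=-\gamma(-x)$. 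By the Lemma, and since $(\mathrm{id}-\gamma_-)^{\id}=\mathrm{id}-g$ by \eq{pmi}, a function $f$ with $f_-=\gamma_-$ lies in $\mathrm{Sol}(\gamma)$ exactly when
\Eq{*}{[\mathrm{id}-g,f_+]=0\qquad\text{and}\qquad[g,\mathrm{id}-f_+]=0.}
A function $f$ with $f_-=\mathrm{id}-\gamma_-$ lies in $\mathrm{Sol}(\mathrm{id}-\gamma)$ exactly when
\Eq{*}{[g,f_+]=0\qquad\text{and}\qquad[\mathrm{id}-g,\mathrm{id}-f_+]=0.}
In both cases the conditions are required on the non-negative half-line.

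\emph{(i)$\Rightarrow$(ii).} Let $\gamma(x)=ax$ with $0<a<1$. Then $\gamma\in\bts(\mathbb{R}_-)$, and the Corollary produces $f(x)=ax$ for $x\le 0$ and $f(x)=x-(\gamma_-)^{\id}(x)=(1-a)x$ for $x\ge0$. This $f$ is continuous and belongs to $\bts(\mathbb{R})$, so it lies in $\mathrm{Sol}^\#(\gamma)$. The limit $\lim_{x\to0^-}\gamma(x)/x=a$ is finite. Since $g=a\,\mathrm{id}$ and $\mathrm{id}-g=(1-a)\mathrm{id}$ are linear, both pairs of conditions above reduce to the same system:
\Eq{*}{f_+(ax)=af_+(x)\qquad\text{and}\qquad f_+((1-a)x)=(1-a)f_+(x).}
This system is symmetric in $a$ and $1-a$, so the sets of admissible right branches coincide. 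At $x=0$ both systems only require $f_+(0)$ to be a fixed point of the maps involved.

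\emph{(ii)$\Rightarrow$(i).} Fix a continuous $f\in\mathrm{Sol}^\#(\gamma)$. By \eq{same} there is $\tilde f\in\mathrm{Sol}(\mathrm{id}-\gamma)$ with $\tilde f_+=f_+$, so $f_+$ satisfies all four commutation relations. In particular $g$ commutes with $f_+$, and $g$ commutes with $\mathrm{id}-f_+$; the latter gives $g\circ(\mathrm{id}-f_+)=g-f_+\circ g$. Adding this to $g\circ f_+=f_+\circ g$ yields
\Eq{*}{g(x)=g(f_+(x))+g(x-f_+(x)),\qquad x>0.}
This is \eq{sab} with $F:=g$, $r_1:=f_+|_X$ and $r_2:=(\mathrm{id}-f_+)|_X$. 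The hypotheses of Sablik's Theorem hold:
\begin{enumerate}[{\rm (a)}]
\item $r_1$ and $r_2$ are continuous, because $f$ is continuous.
\item $0<r_1,r_2<\mathrm{id}$ on $X$, because $f\in\bts(\mathbb{R})$; this also shows that $r_1,r_2$ map $X$ into $X$.
\item $r_1+r_2=\mathrm{id}$ by construction.
\item $\lim_{x\to0^+}g(x)/x=\lim_{x\to0^+}\gamma(-x)/(-x)$ exists and is finite by assumption.
\end{enumerate}
Sablik's Theorem therefore gives $g(x)=ax$ on $X$, that is, $\gamma(x)=ax$ for $x<0$. The strict bow-tie condition forces $0<a<1$. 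At $x=0$, membership in $\bt(\mathbb{R}_-)$ gives $0\cdot\gamma(0)\ge 0$ and $0\cdot(0-\gamma(0))\ge 0$, both trivially true, so it does not pin down $\gamma(0)$. Instead, continuity of the element $f$ of $\mathrm{Sol}^\#(\gamma)$ together with $f_-=\gamma_-$ gives $\gamma(0)=f(0)=\lim_{x\to0^-}\gamma(x)=0=a\cdot 0$.

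The main obstacle is the interpretation of \eq{same}, and the fact that the argument needs all four commutation relations for one and the same right branch $f_+$. Once the right-branch reading is fixed, the rest is routine: combining the relations into the additive equation \eq{sab} and checking Sablik's hypotheses.
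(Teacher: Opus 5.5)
Your proposal is correct and follows the paper's proof. You read $\mathrm{Sol}(\gamma)=\mathrm{Sol}(\mathrm{id}-\gamma)$ as equality of the admissible right branches, which the paper uses implicitly and you state explicitly; the Lemma then gives commutation relations for $f_+$ that combine into Sablik's equation $F=F\circ f_+ + F\circ(\mathrm{id}-f_+)$, so $F$ is linear, and $\gamma(0)=0$ follows by continuity. The only differences are cosmetic: you add $[g,f_+]=0$ and $[g,\mathrm{id}-f_+]=0$ where the paper subtracts the expanded forms of $[\mathrm{id}-F,f_+]=0$ and $[\mathrm{id}-F,\mathrm{id}-f_+]=0$, and for (i)$\Rightarrow$(ii) you verify the coincidence of the two families directly via the Corollary and the Lemma, whereas the paper invokes Proposition 1 of the earlier paper with $f=a\,\mathrm{id}$.
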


\begin{proof}
If we assume (\ref{i}), then $\lim_{x\to0^-}\frac{\gamma(x)}{x}=a$. Define $f:\mathbb{R}\to\mathbb{R}$ by $f(x):=ax$. Then Proposition 1 of paper \cite{Kis26} yields $f\in\mathrm{Sol}^\#(\gamma)$ and \eq{same}.

Assume (\ref{ii}), let $f\in\mathrm{Sol}^\#(\gamma)$ be continuous and, for the sake of simplicity, denote $F:=(\gamma_-)^{\id}$. Then, by the Lemma and our assumption \eq{same}, it follows that $[\mathrm{id}-F, f_+]=0$ and $[\mathrm{id}-F, \mathrm{id}-f_+]=0$. Consequently, for $x\geq 0$, we obtain
\Eq{*}{
F(x)+f(x-F(x))&=f(x)+F(x-f(x)),\\
f(x-F(x))&=f(x)-F(f(x)).
}
Subtracting the second equation from the first, it follows that
\Eq{*}{
F(x)=F(x-f(x))+F(f(x)),\qquad x>0.
}

Applying Theorem under $r_1:=\mathrm{id}-f_+$ and $r_2:=f_+$, we obtain that there exists $a\in\mathbb{R}$ such that
$F(x)=(\gamma_-)^{\id}(x)=ax$ for $x>0$. This yields that $\gamma(x)=ax$ if $x<0$, where, since $\gamma\in\bts(\mathbb{R}_-)$, we must have $0<a<1$. Finally, we also have that $\gamma(0)=\gamma_-(0)=f_-(0)=f(0)=0$, where the last equality uses that $f\in\bts(\mathbb{R}_-)$ is continuous.
\end{proof}

\end{document}